\documentclass[11pt]{amsart}
\usepackage{amssymb}
\usepackage{graphicx, pinlabel}
\usepackage{graphics}
\usepackage{amscd}
\usepackage{enumerate}
\usepackage{comment}
\usepackage[all]{xy}
\usepackage{mathtools} 
\usepackage{xcolor} 
\usepackage{xspace} 
\usepackage[normalem]{ulem} 
\usepackage[margin=3cm, marginparwidth=2.5cm]{geometry}
\usepackage{hyperref}

\newtheorem{thm}{Theorem}[section]
\newtheorem{theorem}[thm]{Theorem}

\newtheorem{lemma}[thm]{Lemma}
\newtheorem{prop}[thm]{Proposition}
\newtheorem{proposition}[thm]{Proposition}

\theoremstyle{definition}

\newtheorem{question}[thm]{Question}
\theoremstyle{remark}

\newtheorem{remark}[thm]{Remark}

\newtheorem{example}[thm]{Example}

\numberwithin{equation}{section}


\renewcommand{\theta}{\vartheta}


\DeclareMathOperator{\im}{im}

\newcommand{\N}{\mathbb{N}}
\newcommand{\Z}{\mathbb{Z}}

\newcommand{\R}{\mathbb{R}}

\newcommand{\F}{\mathbb{F}}


\newcommand{\inv}{^{-1}}


\DeclareMathOperator{\HF}{HF}

\newcommand{\HFp}{\HF^+}
\newcommand{\HFm}{\HF^-}

\renewcommand{\S}{\Sigma}



\definecolor{ao}{rgb}{0.0, 0.5, 0.0} 

\begin{document}

\title[Splitting links by integer homology spheres]{Splitting links by integer homology spheres}

\author{Marco Golla}
\address{CNRS and Laboratoire des math\'ematiques Jean Leray, Nantes Universit\'e}
\email{marco.golla@univ-nantes.fr}

\author{Marco Marengon}
\address{HUN-REN Alfr\'ed R\'enyi Institute of Mathematics}
\email{marengon@renyi.hu}

\date{}

\begin{abstract}
For every $n \geq 3$, we construct 2-component links in $S^{n+1}$ that are split by an integer homology $n$-sphere, but not by $S^n$.
In the special case $n=3$, i.e.\ that of 2-links in $S^4$, we produce an infinite family of links $L_\ell$ and of integer homology spheres $Y_\ell$ such that the link $L_\ell$ is (topologically or smoothly) split by $Y_\ell$ and by no other integer homology sphere in the family.
\end{abstract}

\maketitle


\section{Introduction}

A link in $S^{n+1}$ is the image of an embedding of a disjoint union of $(n-1)$-spheres, which we consider up to ambient isotopy.
We recall that a link is called (\emph{smoothly} or \emph{topologically}) \emph{split} if there is a (smooth or locally-flat) embedding of $S^n$ in $S^{n+1}$ that separates the link, i.e.\ each component of $S^{n+1} \setminus S^n$ contains at least one component of the link.
More generally, given a closed $n$-manifold $Y$, we say that a link in $S^{n+1}$ is (\emph{smoothly} or \emph{topologically}) \emph{split by $Y$} if there exists a (smooth or locally-flat) embedding of $Y$ in $S^{n+1}$ that separates the link.

Moreover, we call links which are (smoothly or topologically) split by an integer homology sphere (smoothly or topologically) \emph{integrally split}.
In this paper we show that there exist integrally split links which are not split by the standard $n$-sphere, answering a question raised (in the case $n=3$) by Peter Kronheimer during Maggie Miller's talk at the Kronheimer birthday conference in Oxford.

\begin{theorem}
\label{t:main}
For every $n\geq3$, there exists a $2$-component link $L$ in $S^{n+1}$ which is smoothly integrally split, but not topologically split by $S^n$.
\end{theorem}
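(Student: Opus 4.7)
The plan is to construct $L=K_1\cup K_2$ by placing each $(n-1)$-sphere $K_i$ inside one of the two pieces cut by a carefully chosen integer homology $n$-sphere $Y\subset S^{n+1}$, and to obstruct topological splittings by $S^n$ via a fundamental-group (or Alexander-module) invariant of the link complement that sees the amalgamation structure coming from $Y$.

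I would start by fixing a smooth embedding of an integer homology $n$-sphere $Y\subset S^{n+1}$ with $\pi_1(Y)\neq 1$, so that $S^{n+1}=X_1\cup_Y X_2$ is a decomposition into smooth integer homology $(n+1)$-balls. For $n=3$, natural candidates arise as boundaries of smooth Mazur-type contractible 4-manifolds whose double is $S^4$; for $n\geq 4$, one obtains analogous examples by doubling contractible $(n+1)$-manifolds bounded by non-simply-connected homology spheres (which exist by Kervaire). Inside each $X_i$ I would choose an $(n-1)$-sphere $K_i\subset\Int(X_i)$ so that the inclusion $Y\hookrightarrow X_i\setminus K_i$ is $\pi_1$-injective and its image survives into $\pi_1(S^{n+1}\setminus L)$. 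Setting $L:=K_1\cup K_2$, the link is smoothly split by $Y$ by construction.

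To rule out topological $S^n$-splittings, suppose for contradiction that $S^{n+1}=B_1\cup_{S^n}B_2$ locally flatly with $K_i\subset B_i$. Since $S^n$ is simply connected for $n\geq 2$, Seifert--van Kampen gives
\[
\pi_1(S^{n+1}\setminus L)\;=\;\pi_1(B_1\setminus K_1)\;*\;\pi_1(B_2\setminus K_2),
\]
whereas the $Y$-splitting yields
\[
\pi_1(S^{n+1}\setminus L)\;=\;\pi_1(X_1\setminus K_1)\;*_{\pi_1(Y)}\;\pi_1(X_2\setminus K_2).
\]
I would choose the $K_i$ so that the amalgamated product contains an algebraic feature incompatible with a free-product structure---for example, an element of finite order arising from torsion in $\pi_1(Y)$ (when $Y$ is the Poincar\'e sphere or a similar $Y$ with torsion in $\pi_1$), detected by a representation of the link group to a suitable finite group. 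Alternatively, a non-vanishing Alexander invariant that must vanish for $S^n$-split links could provide the obstruction.

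The main obstacle is the tension between the two requirements on $K_i$: to be split by $Y$, each $K_i$ must sit entirely inside $X_i$ (so it is forced to be relatively simple in $S^{n+1}$); to obstruct $S^n$-splitting, the link complement must remember enough of $\pi_1(Y)$. I expect the construction to resolve this by taking $K_i$ as the boundary of a suitable handle (or its higher-dimensional analogue) in a specific handle decomposition of $X_i$, so that the meridians of $K_i$ pick up relators of $\pi_1(Y)$ while the $K_i$ themselves stay well clear of $Y$. Varying the handle data should produce the infinite family alluded to in the abstract for $n=3$.
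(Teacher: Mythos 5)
Your proposal is a genuinely different strategy from the paper's, but as written it has two gaps, each of which is fatal on its own. First, the construction is never carried out: everything hinges on choosing $(n-1)$-spheres $K_i\subset\Int(X_i)$ so that $\pi_1(Y)$ injects into $\pi_1(X_i\setminus K_i)$ and ``survives'' into $\pi_1(S^{n+1}\setminus L)$. In your suggested examples the $X_i$ are contractible, so $\pi_1(X_i\setminus K_i)$ is normally generated by a single meridian of $K_i$; arranging that the (typically large) group $\pi_1(Y)$ embeds into such a group, compatibly on both sides, is precisely the hard part of the problem, and ``take $K_i$ to be the boundary of a suitable handle'' is a hope, not a construction. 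Second, the obstruction step does not work as stated. By the Kurosh subgroup theorem a free product can perfectly well contain torsion (any finite-order element is conjugate into a factor), and higher-dimensional knot groups such as $\pi_1(B_i\setminus K_i)$ can themselves have torsion (twist-spun knots give examples), so the presence of a finite-order element coming from $\pi_1(Y)$ is not incompatible with the free-product decomposition $\pi_1(B_1\setminus K_1)*\pi_1(B_2\setminus K_2)$; what you would actually need is a proof that the link group is freely indecomposable (or at least not a free product of the relevant kind), which you do not attempt. The fallback suggestions fare no better: as the paper points out in Remark~\ref{rem:Alex}, splitting by \emph{any} rational homology sphere already forces the single-variable Alexander polynomial to vanish, so Alexander-type invariants that vanish for $S^n$-split links tend to vanish for integrally split links as well, and no specific representation-theoretic invariant separating the two situations is identified.

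For comparison, the paper avoids $\pi_1$ of the link complement altogether (and in particular never needs $\pi_1(Y)$ to inject into the link group). It constructs $L$ as two push-offs of the binding of an open book on $S^{n+1}$ obtained from the $1$-twist spin of a knot $K\subset S^n$ (Proposition~\ref{prop:construction}), so that $\Sigma_2(S^{n+1},L)$ is a mapping torus on $Y=\Sigma_2(S^n,K)\#\Sigma_2(S^n,m(K))$. Then Lemma~\ref{lem:obstruction} shows that any homology sphere $Z$ splitting $L$ lifts to the infinite cyclic cover $\R\times Y$ as a homological section, producing an invertible cobordism from $Y$ to $Z$; taking $Z=S^n$ yields a degree-$1$ map $S^n\to Y$, which is impossible once $\pi_1(Y)\neq 1$ (Proposition~\ref{prop:alldim}), with $Y$ arranged to be a non-simply-connected integer homology sphere by iterated $0$-spinning of an odd torus knot. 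If you want to salvage your approach, the missing ingredients are an explicit construction of the $K_i$ realizing the $\pi_1$-injectivity, and a genuine free-indecomposability argument for the resulting amalgam; both are substantial.
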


We note that in the case $n=2$ (i.e.\ the case of classical links in $S^3$) the generalised notion of ``split by $Y$'' is not very useful, because every link with at least $2$ components is separated by a torus, which is the simplest possible surface after a 2-sphere. 
On the other hand, such a notion is highly non-trivial whenever $n \geq 3$.
A simple homological argument provides an obstruction to being split by a rational homology sphere: if $L$ is split by a rational homology sphere, then the (single-variable) Alexander polynomial vanishes (see Remark \ref{rem:Alex}).
Homological arguments can also be used to produce examples of triples $(L,Y,Z)$, where $Y$ and $Z$ are rational homology spheres and $L$ is a link split by $Y$ but not by $Z$ (see Remark \ref{rem:QHS}; these examples have $H_1(Y) \neq H_1(Z)$).
Exhibiting such triples where $Y$ and $Z$ are \emph{integer} homology spheres, as in Theorem \ref{t:main}, is more subtle, and requires arguments that go beyond homology.

We also remark (see Remark \ref{rem:homotopy}) that there is no real hope to strengthen Theorem \ref{t:main} by splitting a non-split link by a homotopy sphere: in higher dimensions, there are no embeddings at all of non-trivial homotopy spheres into $S^{n+1}$ \cite{h-cob}, and in lower dimensions the only non-trivial case would be when $n=4$, but the existence of non-trivial homotopy $4$-spheres is still open.

While Theorem \ref{t:main} states the existence of \emph{one} link for every $n \geq 3$, we will actually exhibits \emph{infinitely many} integrally split links that are not split by $S^n$. Moreover, in the case $n=3$ we can produce an infinite family of links and an infinite family of integer homology $3$-spheres such that each link in the family is split by exactly one such integer homology $3$-sphere.

\begin{theorem}
\label{t:infinitelymany-top}
There exist an infinite family $\{L_\ell\}_{\ell \geq 1}$ of $2$-component links in $S^4$ and an infinite family $\{Y_\ell\}_{\ell \geq 1}$ of integer homology $3$-spheres such that $L_k$ is smoothly split by $Y_k$ but not topologically split by any other $Y_\ell$.
\end{theorem}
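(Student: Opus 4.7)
The plan is to parametrise the construction used in the proof of Theorem~\ref{t:main} so as to obtain an infinite matched family indexed by $\ell \geq 1$. I would begin by fixing an infinite sequence $\{Y_\ell\}_{\ell\geq 1}$ of pairwise non-homeomorphic integer homology $3$-spheres, all embedding smoothly in $S^4$ and all fitting into the framework of the proof of Theorem~\ref{t:main}; natural candidates are a sequence of Brieskorn homology spheres known to embed in $S^4$, or double branched covers of ribbon knots chosen with pairwise distinct values of some discrete invariant.

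For each $\ell$, the embedding $Y_\ell \hookrightarrow S^4$ decomposes the $4$-sphere as $W_\ell^+ \cup_{Y_\ell} W_\ell^-$, and the construction from Theorem~\ref{t:main} produces a $2$-knot $K_\ell \subset W_\ell^+$ carrying topological information about $Y_\ell$. Setting $L_\ell := K_\ell \sqcup U_\ell$, with $U_\ell \subset W_\ell^-$ an unknotted $2$-sphere, immediately gives that $L_\ell$ is smoothly split by $Y_\ell$, establishing the positive direction.

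For the negative direction, I would fix $\ell$ and $m \neq \ell$, and suppose for contradiction that $Y_m$ topologically splits $L_\ell$. Then $K_\ell$ lies in a locally-flatly embedded topological $4$-manifold $W \subset S^4$ with $\partial W = Y_m$ and $H_*(W) \cong H_*(\mathrm{pt})$. The idea is to extract from the pair $(W, K_\ell)$ a topological invariant $\Phi$ whose value is simultaneously constrained by the construction of $K_\ell$ (hence by $Y_\ell$) and by the splitting hypothesis (hence by $Y_m$). If the family $\{Y_\ell\}$ is chosen so that the values of $\Phi$ are pairwise distinct, this forces $m = \ell$, giving the desired contradiction.

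The hard part is to produce such an invariant $\Phi$ in the topological category. The gauge-theoretic and Heegaard Floer invariants (e.g.\ $d$-invariants, Casson invariants, or Rochlin-type invariants) that most naturally distinguish integer homology $3$-spheres are smooth invariants, whereas Freedman's classification of topological $4$-manifolds makes every integer homology sphere the boundary of a contractible topological $4$-manifold, considerably weakening smooth obstructions. The crux of the proof is therefore to identify topologically-invariant data of the $2$-knot $K_\ell$---such as Alexander module structure, Blanchfield pairings, or signatures of cyclic branched covers of $(S^4,K_\ell)$---that is rich enough to detect $Y_\ell$ through the obstruction $\Phi$, and then to arrange for the $Y_\ell$'s to be pairwise separated by the corresponding values. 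This last step should reduce to a choice problem within a sufficiently large family once the invariant $\Phi$ has been defined and its behaviour under the splitting hypothesis analysed.
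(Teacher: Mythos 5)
Your proposal has two genuine gaps. First, the construction itself would fail: if $L_\ell = K_\ell \sqcup U_\ell$ with $U_\ell$ an unknotted $2$-sphere sitting in $W_\ell^-$ away from $K_\ell$, then $U_\ell$ bounds a $3$-ball disjoint from $K_\ell$, and the boundary of a regular neighbourhood of that ball is an embedded $S^3$ splitting $L_\ell$; by an ambient connected sum of this $S^3$ with an embedded copy of any $Y_m$ in your family (you assumed all $Y_m$ embed smoothly in $S^4$) inside a small ball disjoint from the link, $L_\ell$ would then be split by \emph{every} $Y_m$, so the negative direction cannot hold for such links. The paper's links are of a very different nature: both components are push-offs of the binding of an open book of $S^4$ with page $\Sigma_2(S^3,K)\setminus B^3$ (Proposition \ref{prop:construction}), and the property that drives everything is that $\Sigma_2(S^4,L_K)$ is a mapping torus on $Y_K = \Sigma_2(K)\#\Sigma_2(m(K))$ --- a structural feature your links do not have.

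Second, the obstruction $\Phi$, which you correctly identify as the crux, is never produced; the proposal defers exactly the step that constitutes the theorem's content, and the candidates you list (Alexander modules, Blanchfield pairings, branched-cover signatures of the $2$-knot) are not shown to be constrained by the splitting hypothesis. The paper's solution is not a knot-theoretic invariant of $K_\ell$ at all: using the mapping-torus structure, any integer homology sphere $Z$ splitting $L$ lifts to the maximal abelian cover $\R\times Y$ as a homological section (Lemma \ref{lem:obstruction}), giving a topological invertible homology cobordism from $Y$ to $Z$ and hence a degree-$1$ map $Z \to Y$. Taking $Y_\ell = \Sigma(2,p,q)\#\overline{\Sigma(2,p,q)}$ for torus knots $T(p,q)$ with $p,q$ odd (and $\neq T(3,5)$), a $3$-manifold argument (incompressible surfaces in the connected sum, vanishing of $\pi_3$ for these aspherical Seifert fibred spaces) reduces this to a degree-$1$ map between the Brieskorn summands, which Rong's theorem \cite{Rong} rules out unless $\{p,q\}=\{p',q'\}$. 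Note also that Freedman theory is not the relevant difficulty here: the degree-$1$ map obstruction is purely algebraic-topological and works in the locally flat category, which is how the paper gets a topological conclusion from a smooth construction.
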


The proofs of both Theorems \ref{t:main} and \ref{t:infinitelymany-top} is based on a flexible construction of 2-component links $L$ in $S^{n+1}$ which are split by an integer homology $n$-sphere $Y$ such that the double branched cover $\S_2(S^{n+1}, L)$ is a mapping torus on the same $Y$.

In terms of regularity (smooth vs topological), Theorem \ref{t:infinitelymany-top} is already the best possible one (the construction is smooth, the obstruction is topological). Nonetheless, the topological obstruction from Theorem \ref{t:infinitelymany-top} is quite specific to a certain family of integer homology spheres $\{Y_\ell\}_{\ell \in \N}$, so we also prove a smooth obstruction (using Heegaard Floer homology \cite{HF}) which applies more broadly.

\begin{theorem}
\label{t:smooth-obstruction}
Let $L$ be a $2$-component link in $S^4$ which is smoothly split by an integer homology $3$-sphere $Y$, and such that $\S_2(S^{4}, L)$ is a mapping torus on $Y$.
If $L$ is smoothly split also by an integer homology $3$-sphere $Z$, then $\HF_{\mathrm{red}}(Y)$ is a direct summand of $\HF_{\mathrm{red}}(Z)$.
\end{theorem}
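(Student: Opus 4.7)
The plan is to compute the Heegaard Floer homology of $M := \S_2(S^4, L)$ in two different ways and compare the results.

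\emph{Geometric setup.} First I would observe that, since $Z$ is an integer homology sphere disjoint from $L$, the preimage of $Z$ under the double branched cover $M \to S^4$ consists of two disjoint copies $Z_1 \sqcup Z_2$ of $Z$, swapped by the covering involution $\tau$, that together separate $M$ into two pieces $W_1, W_2$, each a cobordism from $Z$ to $Z$ realized as the double branched cover of an integer homology $4$-ball over a $2$-sphere component of $L$, with $\tau(W_1) = W_2$. Meanwhile, the hypothesis $M \cong Y \times_\phi S^1$ provides an independent description of $M$ as a mapping torus over the integer homology sphere $Y$.

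\emph{Two Floer computations.} Using the mapping torus structure, I would invoke a Floer-theoretic computation (such as the monodromy surgery / Wang-sequence formula for $\HF$ of a mapping torus over an integer homology sphere) to exhibit $\HF_{\mathrm{red}}(Y)$ as a direct summand of $\HF_{\mathrm{red}}(M, \mathfrak{s}_0)$, where $\mathfrak{s}_0$ is the canonical self-conjugate Spin$^c$ structure on $M$ extending the unique Spin$^c$ structure on the fiber $Y$. On the other hand, using the decomposition $M = W_1 \cup_{Z_1 \sqcup Z_2} W_2$ together with the appropriate gluing formula for $\HF$ across a separating integer homology sphere, the cobordism maps $F_{W_i} \colon \HF(Z) \to \HF(Z)$---together with the $H^1(M)$-action---should assemble into a computation of $\HF(M)$ that realizes $\HF_{\mathrm{red}}(M, \mathfrak{s}_0)$ as a direct summand of $\HF_{\mathrm{red}}(Z)$. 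Composing the two identifications would then yield the desired direct-summand inclusion $\HF_{\mathrm{red}}(Y) \hookrightarrow \HF_{\mathrm{red}}(Z)$.

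\emph{Main obstacle.} The hard part will be reconciling the two Floer computations: both the mapping torus formula and the gluing formula involve delicate Spin$^c$-structure bookkeeping (here $H^2(M; \Z) = \Z$, so there are infinitely many Spin$^c$ structures, though only finitely many contribute to $\HF_{\mathrm{red}}$), and one must track that the summand of $\HF_{\mathrm{red}}(M)$ coming from the mapping torus structure coincides with the one embedded in $\HF_{\mathrm{red}}(Z)$ via the gluing. Exploiting the involution $\tau$---which interchanges $W_1$ and $W_2$ and acts freely away from the branch locus---should help to pair up contributions from the two cobordism pieces and pin down the correct Spin$^c$ structures on both sides.
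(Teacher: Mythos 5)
Your plan has a fundamental dimensional error: $M = \S_2(S^4, L)$ is a \emph{four}-manifold (it is the double cover of $S^4$ branched over a link of $2$-spheres), and $Y$ is a $3$-manifold. There is no Heegaard Floer homology $\HF_{\mathrm{red}}(M)$ for a closed $4$-manifold, so the central object of your two-sided computation does not exist. Relatedly, the ``Wang-sequence formula for $\HF$ of a mapping torus over an integer homology sphere'' and the ``gluing formula for $\HF$ across a separating integer homology sphere'' that you invoke are not established tools in the literature, even in the dimension-shifted setting you may have been imagining; they would be substantial results to prove in their own right, and it is far from clear that either would take the tidy form you need.

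Your geometric setup is, however, essentially correct and useful: $p^{-1}(Z) = Z_1 \sqcup Z_2$, and $M$ is cut by $Z_1 \sqcup Z_2$ into two pieces $W_1, W_2$, each the branched double cover of a homology $4$-ball over one component of $L$ and hence a homology cobordism from $Z$ to $Z$, with the deck involution swapping $W_1 \leftrightarrow W_2$. The paper exploits exactly this kind of lifting, but then takes a much more elementary route (Lemma~\ref{lem:obstruction}): it passes further to the maximal Abelian cover of $M$, which by the mapping-torus hypothesis is $\R \times Y$, lifts $Z$ into $\R \times Y$ as a homological section, and observes that trapping $Z$ between two parallel copies of $Y$ produces an \emph{invertible} homology cobordism $W$ from $Y$ to $Z$ (with inverse $W'$). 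The Floer input is then simply functoriality of the cobordism maps on $\HF_{\mathrm{red}}$: since $F_{W'} \circ F_W = F_{I\times Y} = \mathrm{id}$, one gets a graded $\F[U]$-module splitting $\HF_{\mathrm{red}}(Z) \cong \ker F_{W'} \oplus \im F_W$ with $\im F_W \cong \HF_{\mathrm{red}}(Y)$. No computation of any Floer invariant of $M$ (or of a fiber of $M$) is required. If you want to salvage your outline, the move is to stop trying to compute an invariant of $M$ and instead extract from your decomposition of $M$ an invertible cobordism $Y \to Z$; once you have that, the direct-summand statement is a one-line application of functoriality, which is the argument the authors actually run (and which they note goes back to the invertible-concordance literature).
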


We use Theorem \ref{t:smooth-obstruction} to produce another infinite family $\{L'_\ell\}_{\ell \geq1}$ of 2-component links in $S^4$ and another infinite family $\{Y'_\ell\}_{\ell \geq1}$ of $\Z HS^3$ such that $L'_k$ is smoothly split by $Y'_k$ but not by any other $Y'_\ell$ (see Proposition \ref{p:infinitelymany-smooth}).

Ideally we would like to apply the obstruction from Theorem \ref{t:smooth-obstruction} to a case when we know that the link is topologically split, thus detecting exotic splitness. Therefore, we conclude with the following questions.

\begin{question}
Does there exist a $2$-component link in $S^4$ that is topologically but not smoothly (integrally) split?
\end{question}

\begin{question}
Given an integer homology sphere $Y$, does there exist a $2$-component link in $S^4$ that is topologically but not smoothly split by $Y$?
\end{question}

\subsection*{Organisation}
In Section \ref{sec:construction} we present a flexible construction of 2-component links $L$ in $S^{n+1}$ which are smoothly split by an integer homology sphere $Y$. By our construction, the double branched cover $\S_2(S^{n+1}, L)$ is a mapping torus on $Y$.
In Section \ref{sec:alldim} we show that none of such links are split by $S^n$, and we provide infinitely many explicit examples, hence proving Theorem \ref{t:main}.
In Section \ref{sec:top} we prove Theorem \ref{t:infinitelymany-top}, and in Section \ref{sec:smooth} we prove the smooth obstruction (Theorem \ref{t:smooth-obstruction}). We conclude with an appendix recapping some well-known properties of Heegaard Floer homology.

\subsection*{Acknowledgements}
We would like to thank Peter Kronheimer for raising the question and Maggie Miller for the talk on \cite{HKM} which inspired it.
We are also indebted to Michel Boileau for his suggestions and for pointing us to Rong's paper~\cite{Rong}. We thank Clayton McDonald for his comments on a draft of this paper and Danny Ruberman for enlightening conversations about covers.
Finally, we thank the anonymous referee for their insightful comments.
MG thanks the R\'enyi Institute for their hospitality.
MG was partially supported by the \'Etoile Montante project PSyCo of the Region Pays de la Loire.
MM was partially supported by NKFIH grant OTKA K146401.

\section{The construction}
\label{sec:construction}

In this section we exhibit a construction of integrally split links in every dimension.
We start with the following proposition.

\begin{figure}
    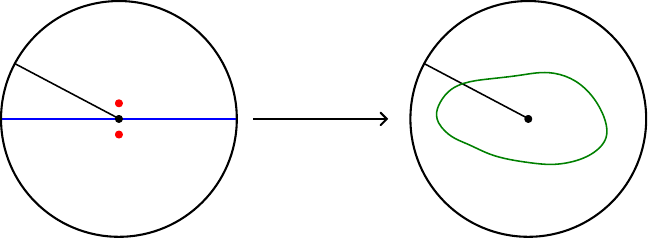
    \caption{The open book decomposition of $S^{n+1}$ with pages diffeomorphic to $B^n$ lifts to an open book decomposition of $\Sigma_2(S^{n+1}, Sp_1(K))$, which is still $S^{n+1}$. The link $L$ is obtained by taking two translates of the binding $S^+_K$ and $S_-(K)$.}
    \label{fig:dbc}
\end{figure}

\begin{proposition}
\label{prop:OBD}
For every knot $K \subset S^n$, there is an open book decomposition of $S^{n+1}$ with binding diffeomorphic to $S^{n-1}$ and pages diffeomorphic to
$
\Sigma_2(S^n,K) \setminus B^n.
$
\end{proposition}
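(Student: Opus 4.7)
The plan is to realize the desired open book by lifting the standard open book of $S^{n+1}$ through a double branched cover along Zeeman's $1$-twist-spin of $K$, as indicated by Figure~\ref{fig:dbc}.

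First, write $S^{n+1} = \partial(B^n \times D^2) = (B^n \times S^1) \cup_{S^{n-1} \times S^1} (S^{n-1} \times D^2)$; this exhibits the standard open book of $S^{n+1}$ with binding $S^{n-1}$ and pages $B^n$. Next, remove from $(S^n, K)$ a trivial ball-pair containing an unknotted subarc of $K$, leaving a knotted ball-pair $(B^n, \alpha)$ in which $\alpha$ is a properly embedded $(n-2)$-ball with $\partial \alpha \subset S^{n-1}$; by construction $\Sigma_2(B^n, \alpha) = \Sigma_2(S^n, K) \setminus B^n$.

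Now I place a rotated copy $\phi_\theta(\alpha) \subset B^n \times \{\theta\}$ of $\alpha$ in each page, where $\phi_\theta$ is a rotation of $B^n$ by angle $\theta$ fixing $\partial \alpha$ pointwise, and cap these off by the two product disks $\partial \alpha \times D^2 \subset S^{n-1} \times D^2$. The resulting codimension-$2$ sphere is Zeeman's $1$-twist-spin $Sp_1(K) \subset S^{n+1}$. Zeeman's theorem on twist-spun knots (Zeeman, 1965) states that $Sp_1(K)$ is unknotted in $S^{n+1}$; in particular, $\Sigma_2(S^{n+1}, Sp_1(K)) \cong S^{n+1}$.

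The standard open book of $S^{n+1}$ lifts through this double branched cover to an open book of $\Sigma_2(S^{n+1}, Sp_1(K)) \cong S^{n+1}$. On each page $B^n \times \{\theta\}$ the branch locus is $\phi_\theta(\alpha)$, so the lifted page is $\Sigma_2(B^n, \phi_\theta(\alpha)) \cong \Sigma_2(S^n, K) \setminus B^n$; on the binding $S^{n-1}$ the branch locus is the unknotted $\partial \alpha \cong S^{n-3} \subset S^{n-1}$, whose double branched cover is again $S^{n-1}$. These fit together into an open book of $S^{n+1}$ with binding $S^{n-1}$ and pages $\Sigma_2(S^n, K) \setminus B^n$, as required.

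The main obstacle is the invocation of Zeeman's unknottedness theorem: without it, the lifted open book could sit on some closed $(n+1)$-manifold different from $S^{n+1}$, and the construction would produce an open book on the wrong space. Everything else is a routine identification of pages and binding under a compatible branched cover.
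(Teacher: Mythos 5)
Your proposal is correct and follows essentially the same route as the paper: construct the $1$-twist spin $Sp_1(K)$ compatibly with the trivial open book of $S^{n+1}$, invoke Zeeman's theorem that $Sp_1(K)$ is unknotted so that $\Sigma_2(S^{n+1},Sp_1(K))\cong S^{n+1}$, and lift the open book through the branched double cover, identifying the pages with $\Sigma_2(S^n,K)\setminus B^n$ and the binding with the double cover of $S^{n-1}$ branched along an unknotted $S^{n-3}$. Your write-up is just slightly more explicit about the decomposition $S^{n+1}=(B^n\times S^1)\cup(S^{n-1}\times D^2)$ and the placement of the rotated arcs, which the paper leaves implicit.
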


\begin{proof}
Consider the 1-twist spun knot $Sp_1(K) \subset S^{n+1}$.
By the spin construction, there is a (trivial) open book decomposition of $S^{n+1}$ with binding an equatorial $S^{n-1}$ and pages diffeomorphic to $B^n_{\theta}$ such that for all $\theta$ the tangle $Sp_1(K) \cap B^n_\theta$ in $B^n_\theta$ has closure $K$.

By \cite[Corollary 2, p.\ 487]{Z-man}, $Sp_1(K)$ is unknotted in $S^{n+1}$.
Thus, the double cover $\S_2(S^{n+1}, Sp_1(K))$ of $S^{n+1}$ branched over $Sp_1(K)$ is again $S^{n+1}$.
Moreover, we can lift the trivial open book decomposition to the double branched cover, thus obtaining a new open book decomposition of $S^{n-1}$.
Its page $P_\theta$ is the double cover of $B^n_\theta$ branched over (a tangle whose closure is) $K$, i.e.\ $\Sigma_2(S^n,K) \setminus B^n$; its binding is the double cover of $S^{n-1}$ branched over an equatorial $S^{n-3}$, hence an $S^{n-1}$.
\end{proof}

From such an open book decomposition we deduce the existence of an integrally split 2-component link, as in the following proposition.

\begin{proposition}
\label{prop:construction}
For every knot $K$, there exists a $2$-component link $L_K$ in $S^{n+1}$ which is split by the $n$-manifold
\[
Y_K := \Sigma_2(S^n,K) \# \Sigma_2(S^n,m(K)),
\]
and such that $\Sigma_2(S^{n+1}, L_K)$ is a mapping torus on $Y_K$.
\end{proposition}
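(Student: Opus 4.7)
The plan is to exploit the open book decomposition of $S^{n+1}$ provided by Proposition \ref{prop:OBD}, and to define $L_K$ as two parallel push-offs of its binding. Write $S^{n+1} = \nu(B) \cup_{B \times S^1} MT$, with binding $B \cong S^{n-1}$, tubular neighborhood $\nu(B) \cong B \times D^2$, and mapping-torus piece $MT = P \times_\phi S^1$, where $P \cong \Sigma_2(S^n,K) \setminus B^n$ and $\phi$ is the monodromy. Then take $L_K := S_K^+ \sqcup S_K^-$, with $S_K^\pm := B \times \{p_\pm\} \subset \nu(B)$ for two interior points $p_\pm \in D^2$ located at antipodal angles, say $\theta = \pm\pi/2$.

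To verify the splitting, I would take the two pages $P_0$ and $P_\pi$ sitting at the remaining antipodal angles. They meet along the binding, so $P_0 \cup_B P_\pi$ is a closed $n$-submanifold of $S^{n+1}$, diffeomorphic to the double of $P$. Since $\partial P \cong S^{n-1}$, this double is $\Sigma_2(S^n,K) \,\#\, (-\Sigma_2(S^n,K)) = \Sigma_2(S^n,K) \,\#\, \Sigma_2(S^n, m(K)) = Y_K$, using that an orientation-reversing self-diffeomorphism of $S^n$ sends $K$ to $m(K)$. This submanifold separates $S^{n+1}$ into the two halves $\theta \in (0,\pi)$ and $\theta \in (\pi,2\pi)$, which contain $S_K^+$ and $S_K^-$ respectively.

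For the mapping-torus structure on $\Sigma_2(S^{n+1}, L_K)$, I would compute the double cover piece by piece. On $\nu(B) = B \times D^2$, the cover branched over $L_K = B \times \{p_+, p_-\}$ equals $B \times A$, where $A$ is the double cover of $D^2$ branched at two interior points, namely an annulus whose boundary $\partial A \to \partial D^2$ is a trivial $2$-fold cover. On $MT$ the cover is unbranched, and the key observation is that the defining character $\chi \colon H_1(S^{n+1} \setminus L_K) \to \Z/2$ sends $m_\pm \mapsto 1$, hence $m_+ + m_- \mapsto 0$; since the $S^1$-generator of $\partial MT = B \times S^1$ represents $\partial D^2 \simeq m_+ + m_-$ in the link complement, the restriction $\chi|_{MT}$ is trivial and so the cover of $MT$ is two disjoint copies. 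Gluing these along $B \times \partial A$, with $B \times A$ acting as a trivial collar, yields $MT \cup_{B \times S^1} MT$; fiberwise over the shared $S^1$-direction this doubles $P$ along $B$, producing a fiber diffeomorphic to $Y_K$, and hence exhibits $\Sigma_2(S^{n+1}, L_K)$ as a mapping torus on $Y_K$.

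The main obstacle is the character computation showing $\chi|_{MT}$ is trivial, together with the bookkeeping needed to verify that the $S^1$-direction of the annulus piece $B \times A$ aligns with the mapping-torus $S^1$'s of the two copies of $MT$, so that the whole assembles as a single $S^1$-fibration with fiber $Y_K$ rather than some mismatched gluing.
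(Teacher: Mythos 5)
Your proposal is correct and follows essentially the same route as the paper: you take two push-offs of the binding of the open book from Proposition \ref{prop:OBD}, identify the splitting manifold as two opposite pages glued along the binding (the double of the page, i.e.\ $Y_K$), and compute the branched double cover piecewise as $B \times A$ over the binding neighbourhood and a trivial disconnected cover over the mapping-torus exterior, reassembling into a mapping torus with fibre $Y_K$, exactly as in the paper's proof. The one point to tighten is that triviality of $\chi|_{MT}$ requires checking all of $H_1(MT)$, not just the boundary $S^1$-class; this follows because every loop in the exterior has even total linking number with $L_K$ (it links each push-off exactly as it links the binding), which is precisely the paper's phrasing of the same step.
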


Here and below, $m(K)$ denotes the mirror of the knot $K$, and therefore $\Sigma_2(S^n,m(K))$ is diffeomorphic to $-\Sigma_2(S^n,K)$.

\begin{proof}
Consider the open book decomposition of $S^{n+1}$ from Proposition \ref{prop:OBD}, and let $S$ denote its binding.
By gluing together two opposite pages $P_\theta$ and $P_{-\theta}$ along their common boundary $S$, we obtain an $n$-manifold
\[
Y_K := \S_2(K \# m(K)) = \S_2(K) \# \S_2(m(K)),
\]
which splits $S^{n+1}$ in two diffeomorphic parts.
That is, we have exhibited a splitting of $S^{n+1}$ by $Y_K$, i.e.\ $S^{n+1} = W^+_K \cup_{Y_K} W^-_K$.
See Figure \ref{fig:dbc}.

We define a link $L_K \subset S^{n+1}$ by taking the disjoint union of two push-offs of the binding $S$, one in each normal direction to $Y_K$. Call these two $(n-1)$-spheres
$S^+_K \subset W^+_K$ and $S^-_K \subset W^-_K$. Note that in this way, by construction, the link is split by $Y_K$.

We now show that the double cover $p \colon \S_2(S^{n+1},L_K) \to S^{n+1}$ of $S^{n+1}$ branched over $L_K$ is a mapping torus on $Y_K$. In the remaining part of the proof we drop the subscript $K$ for ease of notation.

Note that $S^{n+1}$ can be viewed as the union of a regular neighbourhood $N \cong D^2 \times S$ of $S$ containing $L$ (this is possible since each component of $L$ is a push-off of $S$) and the exterior $E$ of $S$, which is a mapping torus on $\S_2(S^n,K) \setminus B^n$.

Since each loop in $E$ winding once around $S$ loops twice around $L$, the restriction of the double cover $p$ to $E$ is the (trivial) disconnected cover. On the other hand, since $N \cong D^2 \times S$, the double cover of $N$ is the product of the identity of $S$ and the double cover of $D^2$ branched over two points, so $p^{-1}(N)$ is the product of $S^{n-1}$ and an annulus.

Thus, $\S_2(S^{n+1}, L)$ is obtained by gluing the two copies of $E$ to the two boundary components of $(I \times S^1) \times S^{n-1}$, in such a way that the $S^{n-1}$-fibration induced from the mapping torus $E$ on its boundary $\partial E$ is identified with the standard $S^{n-1}$-fibration of $S^1 \times S^{n-1}$, viewed in the boundary of $(I \times S^1) \times S^{n-1}$.

It follows that the manifold $\S_2(S^{n+1},L)$ is also a mapping torus. More precisely, since $E$ is the mapping torus of a self-diffeomorphism of $\S_2(S^n,K) \setminus B^n$, it follows that $\S_2(S^{n+1},L)$ is the mapping torus of a self-diffeomorphism of $Y$. 
\end{proof}

From Proposition \ref{prop:construction} we also obtain an obstruction to the link being split by another integer homology sphere $Z$.

\begin{lemma}
\label{lem:obstruction}
Let $L$ be a $2$-component link in $S^{n+1}$ which is (smoothly or topologically) split by an integer homology $n$-sphere $Y$, and such that $\Sigma_2(S^{n+1}, L)$ is a mapping torus on $Y$.

If $L$ is (smoothly or topologically) split by another integer homology $n$-sphere $Z$, then there is a (smooth or topological) invertible cobordism from $Y$ to $Z$.
\end{lemma}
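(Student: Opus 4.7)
The plan is to lift the splitting of $L$ by $Z$ to the infinite cyclic cover of $M := \Sigma_2(S^{n+1}, L)$ determined by the mapping torus structure on $Y$, and to extract an invertible cobordism as a subregion of a suitable slab in this cover.

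First, since $Z$ is disjoint from $L$ and $H_1(Z;\Z) = 0$, the restriction of the double branched cover $p \colon M \to S^{n+1}$ to $p^{-1}(Z)$ is a trivial (disconnected) double cover, so $p^{-1}(Z) = Z_1 \sqcup Z_2$ and $M = V^+ \cup_{Z_1 \sqcup Z_2} V^-$ where $V^\pm := \Sigma_2(W_Z^\pm, L_i)$ is a cobordism from $Z$ to $Z$. Let $f \colon M \to S^1$ be the mapping torus fibration and $q \colon \widetilde M = Y \times \R \to M$ the associated infinite cyclic cover. Since $H^1(Z;\Z) \cong [Z, S^1] = 0$, each composition $Z_i \hookrightarrow M \xrightarrow{f} S^1$ is nullhomotopic, so each $Z_i$ lifts to a single compact embedded copy $\widetilde Z_i \subset Y \times \R$. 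Correspondingly, $V^+$ admits a compact lift $\widetilde V^+ \subset Y \times \R$ with $\partial \widetilde V^+ = \widetilde Z_1 \sqcup \widetilde Z_2$.

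Next, choose $a < b$ with $\widetilde V^+ \subset Y \times (a, b)$ and identify the slab $Y \times [a,b]$ with $Y \times I$. Granting that $\widetilde Z_1$ separates this slab into two pieces, one adjacent to $Y \times \{a\}$ and the other to $Y \times \{b\}$, call them $V$ and $V'$. Then $V \colon Y \to Z$ is a cobordism, $V' \colon Z \to Y$ is a cobordism, and $V \cup_Z V' = Y \times I$, exhibiting $V$ as an invertible cobordism, as required.

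The main obstacle is the separation claim just used, which is equivalent to showing $[\widetilde Z_1] = \pm [Y \times \{0\}]$ in $H_n(Y \times \R;\Z) \cong \Z$, i.e.\ that the class $[Z_1] \in H_n(M;\Z) \cong \Z$ is primitive. A direct argument shows $[Z_1] = k[Y]$ for some odd integer $k$: removing $Z_1$ alone leaves the connected manifold $V^+ \cup_{Z_2} V^-$, so $Z_1$ does not separate $M$ and $[Z_1]$ is nonzero modulo $2$. To promote $k$ from odd to $\pm 1$ one combines the interaction between the mapping torus fibration $f$ and the covering involution $\sigma$ of $p$ (which swaps $Z_1$ and $Z_2$) with the fact that $Z$ is a connected integer homology sphere; in either the smooth or the topological category, the argument is the same once the lift $\widetilde Z_1$ is available, so both versions of the lemma follow simultaneously.
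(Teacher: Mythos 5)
Your overall route is the same as the paper's: lift $Z$ to $\Sigma_2(S^{n+1},L)$ and then to the infinite cyclic cover $Y\times\R$ of the mapping torus, show the lift is homologous to a fibre, and cut a slab between two fibres to produce the invertible cobordism. The one place where your write-up is not a proof is precisely the step you flag at the end: ``to promote $k$ from odd to $\pm1$ one combines the interaction between the fibration and the covering involution with the fact that $Z$ is a connected integer homology sphere'' is a list of ingredients, not an argument, and as written the separation claim -- which you correctly identify as the heart of the lemma -- remains unproved. There is also a smaller unjustified assertion earlier: ``correspondingly'' does not give you a compact lift of $V^+$; the fact that the boundary components $Z_1,Z_2$ lift does not imply that $f|_{V^+}$ is nullhomotopic (for that you would need to know that loops in $V^+$ have zero algebraic intersection with the fibre class, which at that point of your argument you do not yet know). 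Fortunately you never actually use $\widetilde V^+$: to choose the slab you only need compactness of $\widetilde Z_1$.

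The missing promotion step is, however, essentially automatic from facts you already have, so the gap closes easily. Since $Y$ is an integer homology sphere, $H_1(Y\times\R;\Z)=0$, so every loop has zero algebraic intersection with the closed, connected, bicollared hypersurface $\widetilde Z_1$ (local flatness gives a bicollar, so this works in both categories), hence $\widetilde Z_1$ separates $Y\times\R$; its class is therefore either $0$ (if both ends of $Y\times\R$ lie on the same side, the other side has compact closure with boundary $\widetilde Z_1$) or $\pm[Y\times\{0\}]$ (if the ends lie on opposite sides). Your mod-$2$ computation -- which does require the small observation that $V^\pm=\Sigma_2(W^\pm_Z,L_i)$ is \emph{connected}, because the meridian of $L_i\subset W^\pm_Z$ maps nontrivially to the deck group, so that $V^\pm$ really is a connected cobordism from $Z_1$ to $Z_2$ -- shows $k$ is odd, in particular nonzero, which rules out the first case; hence $k=\pm1$ with no appeal to the involution. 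With that paragraph inserted, your argument is complete and is in substance the paper's proof, the only real difference being that you verify the ``homological section'' property via the explicit decomposition $M=V^+\cup_{Z_1\sqcup Z_2}V^-$ and a mod-$2$ separation argument, whereas the paper deduces it from the splitting hypothesis together with the identification of $\R\times Y$ as the maximal Abelian cover.
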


Recall that a cobordism $W$ from $Y$ to $Z$ is called \emph{invertible} if there is another cobordism $W'$ from $Z$ to $Y$ such that the composition $W' \circ W$ is isotopic to $I \times Y$ relative to the boundary. Note that if $Y$ and $Z$ are integer homology spheres, then both $W$ and $W'$ must necessarily be integer homology cobordisms.

\begin{proof}
Suppose that there is a (smooth or topological) embedding of $Z$ in $S^{n+1}$ that splits $L$. 
If we restrict the branched double cover $\S_2(S^{n+1}, L) \to S^{n+1}$ to the complement of $L$, we obtain a regular Abelian cover. Since $H_1(Z;\Z)=0$, we can lift the embedding $Z \hookrightarrow S^{n+1}$ to $\S_2(S^{n+1}, L)$, which is a mapping torus on $Y$, and the fact that $Z$ splits $L$ implies that $Z$ lifts as a homological section of $\S_2(S^{n+1}, L)$. 

The long exact sequence in homotopy associated with the fibration $Y \to \Sigma_2(S^{n+1},L) \to S^1$ shows that the image of $\pi_1(Y)$ inside $\pi_1(\Sigma_2(S^{n+1},L))$ is the commutator subgroup, hence the map
\[
\R \times Y \to \Sigma_2(S^{n+1},L)
\]
obtained by unwinding the mapping torus is in fact the maximal Abelian cover of $\S_2(S^{n+1},L)$. Again, since $H_1(Z;\Z)=0$, we can lift $Z$ to $\R \times Y$, again as a homological section. By choosing two parallel lifts of $Y$ on either sides of $Z$ we obtain an identity cobordism $I \times Y$ which is split by $Z$. One of the two components of $(I \times Y) \setminus Z$ gives a cobordism from $Y$ to $Z$, and the other part is the left inverse cobordism.
\end{proof}

\begin{remark}
\label{rem:QHS}
With some homological observations, the arguments in this section can be used to produce a link $L$ and two rational homology spheres $Y$ and $Z$ such that $L$ is split by $Y$ but not by $Z$.
First use Proposition \ref{prop:construction} on a knot $K$ with $\det K \neq 1$ to produce a link $L = L_K$ split by a rational homology sphere $Y = Y_K$ with $H_1(Y;\Z) \neq 0$. Now let $Z$ be another rational homology sphere, with $H_1(Z;\Z)$ of order coprime with that of $H_1(Y;\Z)$, and assume by contradiction that $Z$ splits $L$, too.

We can then follow the argument of Lemma \ref{lem:obstruction}. First one shows that $Z$ lifts from $S^n \setminus L$ to $\Sigma_2(L) \setminus \pi^{-1}(L)$, using the fact that the two-fold cover is Abelian, and that the torsion $\Z$-module $H_1(Z)$ must map trivially to the free $\Z$-module $H_1(S^n \setminus L)$. Then, the fact that $\Sigma_2(L)$ is a mapping torus over $Y$ implies via a Mayer--Vietoris sequence that $H_1(\Sigma_2(L)) \cong \Z \oplus Q$, where $Q$ is a quotient of $H_1(Y)$.
Thus, the map $H_1(Z) \to H_1(\Sigma_2(L))$ vanishes, and thus $Z$ lifts to the Abelian cover $I \times Y$. Moreover, $Z$ is homologically primitive in $I \times Y$, by the same argument as in Lemma \ref{lem:obstruction}, so $[Y] = \pm[Z]$ in $H_3(I \times Y) \cong \Z$. Thus, restricting the projection gives a degree-one map from $Z$ to $Y$, which must induce a surjection in homology, and this gives the desired contradiction.
\end{remark}

\begin{remark}
\label{rem:Alex}
If one wants to construct links that are not split by any rational homology sphere, then there is a simple obstruction coming from the single-variable Alexander polynomial: if $L$ is split by $Y$ and $b_1(Y) = 0$, then $\Delta_L(t) = 0$.
To see this, note that $Y$ lifts to the infinite cyclic cover $\tilde X$ of $L$ associated to the morphism that sends each meridian $\mu_i$ of $L$ to a generator of $\Z$ (here we are taking $L$ with an orientation), and so the preimage of $Y$ to $\tilde X$ is $\coprod_{m\in\Z} t^m \, Y_0$. Consider a loop $\gamma$ in $S^4$ in the homology class $[\mu_1] - [\mu_2]$ that intersects $Y$ geometrically twice.
See Figure \ref{fig:infinitecyclic}.
$\gamma$ also lifts to a loop $\gamma_0$ in $\tilde X$, and the pairing satisfies:
\[
\langle Y_0, \gamma_0 \rangle_{\Z[t^{\pm1}]} = \sum_{m \in \Z} t^{m} \langle t^{-m} Y_0, \gamma_0 \rangle = \pm (t-1).
\]
Thus, the $\Z[t^{\pm1}]$-module $H_1(\tilde X; \Z)$ is not torsion, and therefore its first Alexander ideal vanishes.
\end{remark}

\begin{figure}
\labellist
\pinlabel $L_1$ at 410 157
\pinlabel {\color{red}$Y$} at 392 100
\pinlabel {\color{blue}$\gamma$} at 410 130
\pinlabel {\color{red}$Y_0$} at 93 107
\pinlabel {\color{red}$tY_0$} at 97 205
\pinlabel {\color{red}$t^{-1} Y_0$} at 103 9
\pinlabel {\color{red}$t^{2} Y_0$} at 100 304
\pinlabel {\color{blue}$\gamma_0$} at 133 129
\pinlabel {\color{blue}$t\gamma_0$} at 136 227
\pinlabel {\color{blue}$t^{-1} \gamma_0$} at 141 31
\pinlabel {\color{black}$L_2$} at 357 157
\endlabellist
\includegraphics[scale=0.75]{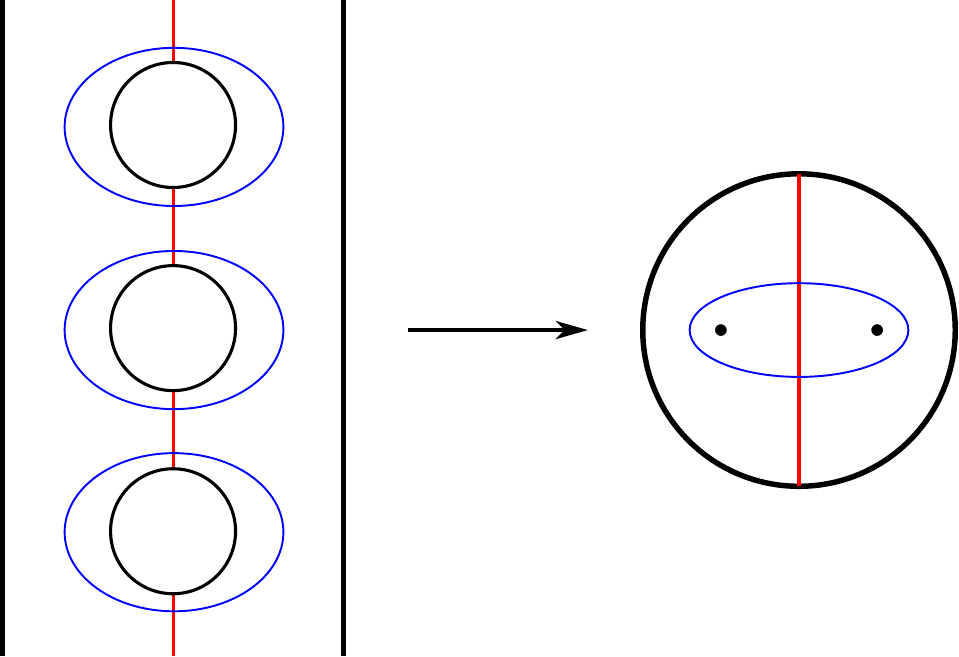}
\caption{Schematic of the infinite cyclic cover of $S^n$ branched over the link $L$.}
\label{fig:infinitecyclic}
\end{figure}


\section{Higher-dimensional links}
\label{sec:alldim}

\begin{proposition}
\label{prop:alldim}
Let $L$ be a $2$-component link in $S^{n+1}$ which is split by an integer homology $n$-sphere $Y$ with $\pi_1(Y) \neq 1$, and such that $\Sigma_2(S^{n+1}, L)$ is a mapping torus on $Y$.
Then $L$ is not topologically split by $S^{n}$.
\end{proposition}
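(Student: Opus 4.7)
The plan is a proof by contradiction. Suppose $L$ is topologically split by a locally flat $S^n \subset S^{n+1}$. The first step is to apply Lemma \ref{lem:obstruction} in the topological category: looking at its proof (not just its statement), what it actually produces is a locally flat embedding of $S^n$ into the trivial cobordism $I \times Y$ splitting it into two codimension-$0$ submanifolds, $W$ (a cobordism from $\{0\} \times Y$ to $S^n$) and $W'$ (a cobordism from $S^n$ to $\{1\} \times Y$).

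The next step is to observe that $n \geq 3$: an integer homology $n$-sphere of dimension $n \leq 2$ is forced to be $S^n$ itself, hence simply connected, so the hypothesis $\pi_1(Y) \neq 1$ forces $n \geq 3$. In particular $\pi_1(S^n) = 1$, and the Seifert--van Kampen theorem applied to $I \times Y = W \cup_{S^n} W'$ (using a bicollar of the locally flat $S^n$) gives
\[
\pi_1(I \times Y) \cong \pi_1(W) * \pi_1(W'),
\]
with the map induced by $W \hookrightarrow I \times Y$ identified with the inclusion of the first free factor.

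The key observation is that the inclusion $\{0\} \times Y \hookrightarrow I \times Y$ is a homotopy equivalence and factors through $W$. Hence the composition
\[
\pi_1(Y) \longrightarrow \pi_1(W) \hookrightarrow \pi_1(W) * \pi_1(W') \cong \pi_1(Y)
\]
is an isomorphism, so the entire free product $\pi_1(W) * \pi_1(W')$ must already lie in the free factor $\pi_1(W)$, forcing $\pi_1(W') = 1$. A symmetric argument using $\{1\} \times Y \hookrightarrow W' \hookrightarrow I \times Y$ yields $\pi_1(W) = 1$ as well, so $\pi_1(Y) \cong \pi_1(W) * \pi_1(W') = 1$, contradicting the hypothesis.

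I do not expect any serious obstacle. The only technical point to watch is that the whole argument must take place in the locally flat topological category: this is needed both to invoke Lemma \ref{lem:obstruction} topologically and to apply Seifert--van Kampen via a bicollar of the locally flat codim-$1$ sphere, both of which are standard.
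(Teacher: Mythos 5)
Your proof is correct, and it diverges from the paper's at the key step. The paper also starts from Lemma \ref{lem:obstruction}, but then uses the data in a different way: the locally flat $S^n$ sits in $\R \times Y$ as a homological section, so composing the embedding with the projection $\R \times Y \to Y$ gives a degree-$1$ map $S^n \to Y$, and a degree-$1$ map is always surjective on $\pi_1$, contradicting $\pi_1(Y) \neq 1$. You instead keep the splitting $I \times Y = W \cup_{S^n} W'$ and run Seifert--van Kampen: since the boundary inclusion $\{0\} \times Y \hookrightarrow I \times Y$ is a homotopy equivalence factoring through $W$, the canonical free factor $\pi_1(W)$ must be all of $\pi_1(W) * \pi_1(W')$, so $\pi_1(W') = 1$, and symmetrically $\pi_1(W) = 1$, whence $\pi_1(Y) = 1$. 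Both arguments are sound; the paper's is shorter once one quotes the standard fact that degree-$1$ maps are $\pi_1$-epimorphisms, while yours trades that fact for the van Kampen/free-product argument, at the cost of needing the separation of $I \times Y$ into two connected pieces and a bicollar of the locally flat $S^n$ (Brown's collaring theorem), which you correctly flag. Two small remarks: the statement of Lemma \ref{lem:obstruction} (invertible cobordism, with $W' \circ W$ identified with $I \times Y$ rel boundary) already gives you the decomposition you want, so you need not appeal to its proof; and the dimension discussion is harmless but only $\pi_1(S^n)=1$, i.e.\ $n \geq 2$, is actually used, which holds throughout the paper's range $n \geq 3$.
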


\begin{proof}
By Lemma \ref{lem:obstruction} there is an invertible topological cobordism from $Y$ to $S^n$, or, equivalently, there is a locally-flat embedding of $S^n$ in $\R \times Y$ as a homological section. Composing with the projection, this would give a degree-1 map from $S^{n}$ to $Y$. A degree-1 map always induces an epimorphism on $\pi_1$, but this is impossible under the assumption that $\pi_1(Y) \neq 1$.
\end{proof}

\begin{remark}
\label{rem:homotopy}
Due to the presence of the hypothesis $\pi_1(Y)\neq1$ in the statement of Proposition \ref{prop:alldim}, the reader may wonder if it is possible to find a 2-component link in $S^{n+1}$ which is split by an $n$-dimensional \emph{homotopy} sphere, but not by $S^n$.
The answer is always negative.

When $n \neq 4$ it is impossible to even embed an $n$-dimensional exotic sphere $Y$ in $S^{n+1}$. Indeed, an embedding $Y \hookrightarrow S^{n+1}$ would give an $h$-cobordism between $Y$ and $S^n$ (after removing a small ball from $S^{n+1}$), hence showing that $Y$ is diffeomorphic to $S^n$ \cite{h-cob}. (Note that $n \geq 7$ because $Y$ is assumed to be exotic.)

When $n = 4$, even if the existence of exotic 4-spheres is open, one can still show that a homotopy sphere $Y^4$ splits a link $L$ if and only if the standard sphere $S^4$ does. 
This can be deduced from the fact that there is a unique smooth $h$-cobordism between any two homotopy 4-spheres $Y$ and $Y'$, up to diffeomorphism rel boundary \cite{Lawson, Kreck}.
Given the unique $h$-cobordism $W \colon Y \to Y'$ and its inverse $-W \colon Y' \to Y$, the composition $(-W) \circ W$ must be diffeomorphic to $I \times Y$, by uniqueness.
It follows that $Y'$ sits in $I \times Y$ as a homological section.
In other words, a regular neighbourhood $\nu(Y)$ of a homotopy 4-sphere $Y\subset S^5$ contains every other homotopy 4-sphere $Y'$ as a homological section, so $L$ is split by a homotopy 4-sphere if and only if it is split by all of them.
\end{remark}

\begin{theorem}
For every $n\geq3$, there exists a $2$-component link $L$ in $S^{n+1}$ which is smoothly split by an integer homology $n$-sphere $Y$ but is not topologically split by $S^n$.
\end{theorem}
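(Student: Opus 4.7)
The plan is to combine the two main ingredients already in place: Proposition~\ref{prop:construction}, which produces, from any knot $K \subset S^n$, a $2$-component link $L_K \subset S^{n+1}$ smoothly split by $Y_K := \Sigma_2(S^n, K) \# \Sigma_2(S^n, m(K))$ with $\Sigma_2(S^{n+1}, L_K)$ a mapping torus on $Y_K$; and Proposition~\ref{prop:alldim}, which rules out a topological splitting by $S^n$ as soon as $Y_K$ is an integer homology sphere with non-trivial fundamental group. The theorem therefore reduces to exhibiting, for every $n \geq 3$, a knot $K_n \subset S^n$ whose double branched cover $\Sigma_2(S^n, K_n)$ is an integer homology $n$-sphere with non-trivial $\pi_1$.

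Given such a $K_n$, I set $L_n := L_{K_n}$. A connected sum of two integer homology spheres is an integer homology sphere, so $Y_n := \Sigma_2(K_n) \# \Sigma_2(m(K_n))$ is an integer homology $n$-sphere; and van Kampen gives $\pi_1(Y_n) \cong \pi_1(\Sigma_2(K_n)) \ast \pi_1(\Sigma_2(m(K_n)))$, which is non-trivial. Proposition~\ref{prop:alldim} then yields that $L_n$ is not topologically split by $S^n$.

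For $n = 3$ the torus knots $K_3 = T(3, m)$ with $m$ odd and coprime to $3$ (i.e.\ $m \in \{5, 7, 11, 13, \ldots\}$) all have determinant $1$, so $\Sigma_2(K_3) = \Sigma(2, 3, m)$ is an integer homology sphere; these Brieskorn spheres are pairwise non-homeomorphic and have non-trivial fundamental group (finite of order $120$ for $m = 5$, infinite for $m \geq 7$). This alone produces infinitely many inequivalent links $L_n$ in $S^4$.

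The hard part will be the higher-dimensional case $n \geq 4$: by Brieskorn's theorem, higher-dimensional Brieskorn homology spheres are simply connected, so the algebraic-knot construction no longer supplies the required input $K_n$ for $n \geq 5$ odd, and is unavailable for $n$ even. I would overcome this obstruction by an inductive construction, producing $K_{n+1} \subset S^{n+1}$ from $K_n \subset S^n$ by an appropriately chosen twist-spin (in the sense of Zeeman) and verifying that the resulting double branched cover remains an integer homology sphere while retaining non-trivial fundamental group. Running the construction on the infinite input family from the $n = 3$ case would then yield infinitely many examples in every dimension $n \geq 3$.
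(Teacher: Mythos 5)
Your overall reduction is the same as the paper's: feed a knot $K_n\subset S^n$ whose double branched cover is an integer homology sphere with non-trivial $\pi_1$ into Proposition~\ref{prop:construction}, and conclude with Proposition~\ref{prop:alldim}. The gap is that you only actually produce such knots for $n=3$; for every $n\geq 4$ you offer a plan (``an appropriately chosen twist-spin \dots\ and verifying that the resulting double branched cover remains an integer homology sphere while retaining non-trivial fundamental group'') without carrying it out, and that verification is precisely the remaining content of the proof, since the theorem claims all $n\geq 3$. Moreover, the phrase ``appropriately chosen twist-spin'' hides a real issue: the choice matters. The $1$-twist spin is unknotted in $S^{n+1}$ \cite{Z-man}, so its double branched cover is $S^{n+1}$ and the construction collapses; and for $k\geq 2$ the $k$-twist spin does not preserve the knot group (its complement fibres over $S^1$ with fibre the punctured $k$-fold branched cover), so neither of the two properties you need is automatic.

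The correct and essentially forced choice is the untwisted spin $Sp_0$, which is what the paper uses: starting from $J=T(p,q)$ with $p,q>1$ odd (so $\det J=1$ and $\Sigma_2(S^3,J)=\Sigma(2,p,q)$ has non-trivial $\pi_1$), set $K:=Sp_0(\cdots Sp_0(J)\cdots)$, iterated $n-3$ times. For $Sp_0$ both needed facts are straightforward: writing $(S^{n+1},Sp_0(K))=\partial\left((B^n,A)\times D^2\right)$ for a ball--arc pair $(B^n,A)$ representing $K$, the double branched cover decomposes as $\left(\Sigma_2(S^n,K)\setminus B^n\right)\times S^1$ glued to $S^{n-1}\times D^2$, so a Mayer--Vietoris argument shows it is again an integer homology sphere, and since $0$-spinning preserves the knot group one gets $\pi_1(\Sigma_2(S^{n+1},Sp_0(K)))\cong\pi_1(\Sigma_2(S^n,K))\neq 1$. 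With that step supplied, your argument coincides with the paper's proof; also note that your worry about the parity of $n$ (and about higher-dimensional Brieskorn spheres) disappears, since spinning works uniformly in all dimensions.
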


\begin{proof}
Let $J$ be a non-trivial knot in $S^3$ with $\det J = 1$, so that $\S_2(S^3, J)$ is an integer homology sphere with $\pi_1(\S_2(S^3, J)) \neq 1$.
(Concretely, we can choose an infinite family of such knots by taking $J$ to be any torus knot $T(p,q)$ with $p,q > 1$ both odd, in which case $\S_2(S^3, J)$ is the Brieskorn sphere $\S(2,p,q)$.)

We define a knot $K$ in $S^{n}$ by taking the $0$-spun knot $n-3$ times:
\[
K := Sp_0 \left( \cdots \left( Sp_0(J) \right) \cdots \right).
\]
From the $0$-spinning construction, it is straightforward to check that if $J$ is a knot whose double branched cover is an integer homology sphere, then so is $Sp_0(J)$. Thus, iterating this reasoning, we have that $\S_2(S^n, K)$ is an integer homology sphere.
Moreover, since $0$-spinning preserves the knot group, we have that $\pi_1(S^{n} \setminus K) \cong \pi_1(S^3 \setminus J)$, and therefore $\pi_1(\S_2(S^n, K)) \cong \pi_1(\S_2(S^3, J))$, being their unique index-$2$ subgroups: thus, $\S_2(S^n, K)$ is also an integer homology sphere with $\pi_1(\S_2(S^n, K)) \neq 1$.

By Proposition \ref{prop:construction}, there exists a link $L_K$ in $S^{n+1}$ which is split by the manifold $Y_K = \S_2(S^n, K) \# \S_2(S^n, m(K))$ (which by the previous paragraph is an integer homology sphere with $\pi_1(Y_K) \neq 1$), and such that $\S_2(S^{n+1}, L)$ is a mapping torus over $Y_K$.
It then follows from Proposition \ref{prop:alldim} that $L_K$ is not topologically split by $S^n$.
\end{proof}

\section{Locally-flatly splitting in dimension $4$}
\label{sec:top}

In this section we prove the following more precise version of Theorem \ref{t:infinitelymany-top}.

\begin{prop}\label{p:infinitelymany}
For a positive torus knot $T = T(p,q) \neq T(3,5)$ with $p,q$ both odd, let $L_T$ and $Y_T$ denote the link and the integer homology spheres given by Proposition \ref{prop:construction}. If $T, T'$ are two positive torus knots as above, then $L_T$ is smoothly split by $Y_T$ but not topologically split by $Y_{T'}$.
\end{prop}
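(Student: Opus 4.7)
The smooth splitting of $L_T$ by $Y_T$ is immediate from Proposition \ref{prop:construction}, using the standard identification $\S_2(S^3, T(p,q)) = \S(2,p,q)$ (with opposite orientation for the mirror knot). The content of the proposition lies in the topological obstruction. The plan is to argue by contradiction: assume $L_T$ is topologically split by $Y_{T'}$ for some $T' = T(p',q') \neq T = T(p,q)$. By Lemma \ref{lem:obstruction} there is a locally-flat invertible cobordism from $Y_T$ to $Y_{T'}$, and reproducing the argument in the proof of Proposition \ref{prop:alldim} (lifting the embedded $Y_{T'}$ to a homological section of $\R \times Y_T$ and composing with the projection) produces a locally-flat degree-one map
\[
 f\colon Y_{T'} \To Y_T.
\]
The exclusion $T, T' \neq T(3,5)$ ensures that each prime summand $\pm\S(2,p,q)$, $\pm\S(2,p',q')$ is Seifert-fibered with hyperbolic base $2$-orbifold and, in particular, has infinite fundamental group. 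It then suffices to rule out such a degree-one map
\[
\S(2,p',q') \,\#\, (-\S(2,p',q')) \;\To\; \S(2,p,q) \,\#\, (-\S(2,p,q))
\]
when $(p,q) \neq (p',q')$.

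The key ingredient will be Rong's rigidity theorem \cite{Rong} for degree-one maps between Seifert-fibered integer homology spheres with hyperbolic base orbifold, which in particular forces every degree-one map between two such Brieskorn spheres to be a homotopy equivalence. To leverage it, I would first carry out a prime-decomposition reduction: since each $\pm\S(2,p,q)$ is irreducible, a standard transversality/compression argument applied to the separating essential $2$-sphere $S \subset Y_T$ splitting the two Brieskorn summands (invoking topological transversality in the locally-flat category) allows one to homotope $f$ so that $f^{-1}(S)$ contains a separating essential $2$-sphere $S' \subset Y_{T'}$ realizing its own prime decomposition, with each side of $S'$ mapping with degree one to the corresponding side of $S$. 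Capping off with $3$-balls then produces two degree-one maps between prime Brieskorn summands $\S(2,p',q') \to \pm \S(2,p,q)$; Rong's rigidity makes them homotopy equivalences, whence $\pi_1(\S(2,p',q')) \cong \pi_1(\S(2,p,q))$ and so $(p',q') = (p,q)$ by the classification of Brieskorn sphere fundamental groups, contradicting $T \neq T'$.

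The most delicate step I expect is the locally-flat prime-decomposition reduction: the classical smooth sphere-compression arguments must be transposed to the topological category, and one has to verify that the preimage of a separating sphere in $Y_T$ really contains a separating essential sphere in $Y_{T'}$ that matches prime summands with the right degrees. Once this reduction is carried out, Rong's theorem (whose relevance is flagged in the acknowledgments) supplies the contradiction essentially as a black box.
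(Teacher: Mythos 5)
Your overall strategy coincides with the paper's: Lemma \ref{lem:obstruction} produces an invertible cobordism, hence a continuous degree-one map $f\colon Y_{T'}\to Y_T$, which one then wants to split along the connected-sum sphere $S\subset Y_T$ and feed into Rong's theorem. The problem is that the step you defer as ``standard'' --- homotoping $f$ so that $f^{-1}(S)$ contains a separating essential sphere with \emph{each side mapping with degree one to the corresponding side of $S$} --- is precisely the nontrivial content of the proof, and you also mislocate where the difficulty lies. There is no locally-flat transversality issue at all: only the embedding of $Y_{T'}$ in $\R\times Y_T$ is locally flat, whereas $f$ itself is just a continuous map between closed smooth $3$-manifolds, so it can be smoothed by approximation (the paper does this in one line). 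The actual work is: (i) after transversality, use Laudenbach's surgery technique to make $f^{-1}(S)$ a union of incompressible surfaces; (ii) since $\Sigma(2,p',q')$ is a small Seifert fibred space, the only connected incompressible surface in $Y_{T'}=\Sigma(2,p',q')\#\overline{\Sigma(2,p',q')}$ is the separating sphere, so $f^{-1}(S)$ is a union of parallel copies of it; (iii) one cannot in general arrange a single preimage sphere, and a priori the degree could be distributed over the $S^2\times[0,1]$ regions between consecutive preimage spheres --- the paper excludes this by capping each such region off to a map $S^3\to\Sigma(2,p,q)$ and using $\pi_3(\Sigma(2,p,q))=0$ (irreducibility plus infinite $\pi_1$) to conclude those restrictions have degree zero, so that one of the two outermost pieces, $\Sigma(2,p',q')\setminus B^3$ or its mirror, maps with degree one onto $\Sigma(2,p,q)\setminus B^3$, and only then can one cap off to a degree-one map between prime summands. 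Your sketch asserts the conclusion of (i)--(iii) without argument; in particular the degree-one claim for the two sides is exactly what needs proof.

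Two smaller points. Your paraphrase of Rong is stronger than what is available or needed: the paper only uses that \cite[Theorem~3.2]{Rong} forbids a degree-one map between Brieskorn spheres $\Sigma(2,p,q)$ and $\Sigma(2,p',q')$ with infinite fundamental groups unless $\{p,q\}=\{p',q'\}$; no homotopy-equivalence statement is required. Also, the reduction may well hand you a degree-one map onto $\Sigma(2,p,q)$ \emph{or} its orientation reversal, so you should note explicitly that Rong's obstruction is insensitive to this choice of orientation, as the paper does.
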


We are indebted to Michel Boileau for suggesting the idea of the proof.

Recall that if $T = T(p,q)$ is a torus knot with $p$, $q$ both odd, then $\det T = 1$ and the double cover $\Sigma_2(S^3,T)$ is the Brieskorn sphere $\Sigma(2,p,q)$, which is a small Seifert fibred space. If, furthermore, $T \neq T(3,5)$, then $\pi_1(\Sigma(2,p,q))$ is infinite. Since small Seifert fibred spaces are irreducible, by the sphere theorem the universal cover of $\Sigma(2,p,q)$ has trivial $\pi_2$. Since the fundamental group of $\Sigma(2,p,q)$ is infinite, the universal cover has trivial $H_3$, and by the Hurewicz theorem $\Sigma(2,p,q)$ has trivial $\pi_3$ too.

Moreover, there are no incompressible surfaces in $\Sigma(2,p,q)$, and the only connected incompressible surface in $\Sigma(2,p,q) \# \overline{\Sigma(2,p,q)}$ is the separating 2-sphere. It follows that the only incompressible surfaces in $\Sigma(2,p,q) \# \overline{\Sigma(2,p,q)}$ are parallel copies of the separating 2-sphere. Indeed, any incompressible surface lives in one of the summands (by an innermost circle argument on the separating 2-sphere).

As the reader might have anticipated, we will be interested in looking at degree-$1$ maps between connected sums of Brieskorn spheres. Rong has studied degree-1 maps between Seifert manifolds in~\cite{Rong} and proved such maps are not so common. In particular,~\cite[Theorem~3.2]{Rong} asserts that if there exists a degree-$1$ map between two Brieskorn spheres $\Sigma(2,p,q)$ and $\Sigma(2,p',q')$ with infinite fundamental group (with either orientation), then\footnote{This is easy to verify using the condition on the least common multiples of the multiplicities of the fibres in the definition of the partial order.} $\{p,q\} = \{p',q'\}$.

\begin{proof}
Suppose that there are $T=T(p,q)$ and $T'=(p',q')$ positive torus knots with $p,q,p',q'$ all odd such that $L_{T}$ is split by $Y_{T'}$. For convenience, call $Y = \Sigma(2,p',q')$ and $Z = \Sigma(2,p,q)$, so that $Y_{T'} \cong Y\#\overline Y$ and $Y_{T} \cong Z\# \overline Z$.

By Lemma \ref{lem:obstruction} there is an invertible cobordism from $Z\# \overline Z$ to $Y\# \overline Y$, and therefore a topological embedding of $Y\#\overline{Y}$ in $\R \times (Z\# \overline{Z})$ such that post-composing with the projection $\R \times (Z\# \overline{Z})\to Z\# \overline{Z}$ yields a continuous degree-$1$ map
\[
f\colon Y\#\overline{Y} \to Z\# \overline{Z}.
\]
We want to show that such an $f$ cannot exist. Up to perturbations, we can suppose that $f$ is smooth.

Call $S$ the sphere that splits $Z\# \overline{Z}$ into $Z\setminus B^3$ and $\overline{Z}\setminus B^3$. Up to a small perturbation, we can suppose that $f$ is transverse to $S$, and that in particular $f\inv(S)$ is a surface in $Y\#\overline Y$. Up to performing some embedded surgeries, as explained in~\cite[Section~I.2.3]{Laudenbach}, we can suppose that $f\inv(S)$ is a union of incompressible surfaces.

Since $Y$ is a small Seifert fibred space, the only connected incompressible surface in $Y\#\overline Y$ is the separating sphere, so $f\inv(S)$ is a union of parallel spheres. Suppose that there are at least two spheres in $f\inv(S)$, and choose two consecutive ones, which cobound a 3-manifold $A$, with $A \cong S^2\times [0,1]$. The restriction $f_A$ of $f$ to $A$ maps $A$ either into $Z \setminus B^3$ or into $\overline Z\setminus B^3$ (by connectedness). Suppose that we are in the former case (the latter being entirely equivalent). Note that $f(\partial A) \subset \partial (Z\setminus B^3)$, so we can extend $f_A$ to a map $g_A \colon S^3 \to Z$, where we view $S^3$ as obtained by filling the boundary of $A$ by two 3-balls.

Since $Z$ is a Seifert fibred space with infinite fundamental group, $\pi_3(Z) = 0$, and therefore $g_A$ is null-homotopic, and in particular it has degree $0$. It follows that also $f_A$ has degree $0$ (since $A$ is the preimage of $Z\setminus B^3 \subset Z$, and we can compute the degree by counting the points in the preimage of a generic point).

Since the map $f$ has degree $1$, and its restriction to all these copies of $S^2\times[0,1]$ has degree $0$, the preimage of $Z\setminus B^3$ cannot be contained in a union of these copies, and in particular it must contain, say, $Y\setminus B^3 \subset Y\#\overline Y$. It follows that the restriction to $Y\setminus B^3$ has degree $1$ onto $Z\setminus B^3$. We can now extend the restriction of $f$ to $Y \setminus B^3$ to a degree-$1$ map $g \colon Y \to Z$, but no such map exists by work of Rong~\cite[Theorem~3.2]{Rong}, as explained above.

This concludes the proof.
\end{proof}


\section{Smoothly splitting in dimension $4$}
\label{sec:smooth}

Since the proof of Proposition \ref{p:infinitelymany} uses strong geometric assumptions on the manifolds $Y_{K}$, we also prove a smooth obstruction, namely Theorem \ref{t:smooth-obstruction} from the introduction.
In Theorem \ref{t:smooth-obstruction} we use the Heegaard Floer homology package. We recall the notation and the properties we will need in Appendix \ref{app:HF}.

\begin{proof}[{Proof of Theorem \ref{t:smooth-obstruction}}]
Lemma \ref{lem:obstruction} implies that there is an invertible cobordism $W$ from $Y$ to $Z$, with inverse $W'$.
Since $W$ and $W'$ are integer homology cobordisms, the cobordism maps $F_{W}$ and $F_{W'}$ in $\HF_{\mathrm{red}}$ preserve the Maslov grading. 

By functoriality, the composition
\[
\HF_{\mathrm{red}}(Y) \xrightarrow{F_{W}} \HF_{\mathrm{red}}(Z) \xrightarrow{F_{W'}} \HF_{\mathrm{red}}(Y)
\]
is the identity map, so we have a splitting of graded $\F[U]$-modules
\[
\HF_{\mathrm{red}}(Z) \cong \ker F_{W'} \oplus \im F_{W},
\]
and $\im F_{W} \cong \HF_{\mathrm{red}}(Y)$, as desired.

This argument has appeared for invertible concordances in \cite{concordance}. A similar argument for ribbon concordances and cobordisms received much attention in last years, sparked by \cite{Zemke} (see \cite{DLVVW} for the case of ribbon homology cobordisms). We were also inspired by~\cite{LLP}.
\end{proof}

We apply Theorem \ref{t:smooth-obstruction} to produce a family of examples (in the smooth category) which is not covered by Proposition \ref{p:infinitelymany}.

\begin{prop}
\label{p:infinitelymany-smooth}
For each $\ell \ge 2$, let $K_\ell = T(4\ell+1, 8\ell+3)\#T(3,5)$, and let $L_{K_\ell}$ and $Y_{K_\ell}$ denote the links and the integer homology spheres given by Proposition \ref{prop:construction}.

For each $m, n \ge 2$, the link $L_{K_n}$ in $S^4$ is smoothly split by $Y_{K_n}$ but not by $Y_{K_m}$ for any $m \neq n$.
\end{prop}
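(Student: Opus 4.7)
That $L_{K_n}$ is smoothly split by $Y_{K_n}$ is immediate from Proposition \ref{prop:construction}. For the converse, the plan is to argue by contradiction: suppose $L_{K_n}$ is smoothly split by $Y_{K_m}$ for some $m \neq n$. Since Proposition \ref{prop:construction} identifies $\Sigma_2(S^4, L_{K_n})$ as a mapping torus on $Y_{K_n}$, Theorem \ref{t:smooth-obstruction} produces a direct summand inclusion
\[
\HF_{\mathrm{red}}(Y_{K_n}) \hookrightarrow \HF_{\mathrm{red}}(Y_{K_m})
\]
of graded $\F[U]$-modules, and the remaining task is to rule this out whenever $m \neq n$.

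Set $P := \Sigma(2,3,5)$ and $B_\ell := \Sigma(2, 4\ell+1, 8\ell+3)$. Since double branched covers distribute over connected sum and reverse orientation under mirroring,
\[
Y_{K_\ell} \;=\; P \,\#\, (-P) \,\#\, B_\ell \,\#\, (-B_\ell).
\]
The first step is to compute $\HF_{\mathrm{red}}(Y_{K_\ell})$ via the K\"unneth formula for $\HF^-$ under connected sums (see the appendix), applied iteratively to the four factors. The required inputs are standard: one of $\pm P$ has $\HF_{\mathrm{red}} = 0$ and the other has $\HF_{\mathrm{red}} = \F_{(0)}$, while the graded $\F[U]$-modules $\HF_{\mathrm{red}}(\pm B_\ell)$ are given explicitly by the Ozsv\'ath--Szab\'o algorithm for negative definite Seifert plumbings (equivalently, N\'emethi's graded roots). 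This yields an explicit $\ell$-dependent description of $\HF_{\mathrm{red}}(Y_{K_\ell})$.

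The summand inclusion then forces the coefficient-wise inequality $P_n(t) \leq P_m(t)$ of Poincar\'e polynomials $P_\ell(t) := \sum_k t^k \dim_\F \HF_{\mathrm{red},k}(Y_{K_\ell})$, so the second step is to preclude such an inequality for $m \neq n$. When $m < n$, the total rank $P_\ell(1)$ is strictly increasing in $\ell$---a consequence of the growth of $\dim \HF_{\mathrm{red}}(B_\ell)$, which is visible already from the Casson invariant---and this alone is a contradiction. When $m > n$, one instead locates a specific grading $k$ in which $\HF_{\mathrm{red}}(Y_{K_n})$ carries strictly more generators than $\HF_{\mathrm{red}}(Y_{K_m})$; concretely, the top grading in which $\HF_{\mathrm{red}}(Y_{K_\ell})$ is nonzero moves monotonically with $\ell$, and one tracks the multiplicity of generators in grading $k_n$ on both sides.

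The main obstacle is the grading-wise comparison for $m > n$: it demands a careful bookkeeping of the multiplicities arising from the iterated K\"unneth product, rather than merely of total ranks. The inclusion of the $T(3,5)$-factor in $K_\ell$ is designed to facilitate this: it contributes the rank-one summand $\HF_{\mathrm{red}}(-P) = \F_{(0)}$, which in the K\"unneth expansion duplicates (with a grading shift) the $\ell$-varying data of $\HF_{\mathrm{red}}(\pm B_\ell)$, producing the asymmetry between gradings that distinguishes the different $Y_{K_\ell}$ at the module-theoretic level.
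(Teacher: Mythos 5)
Your skeleton is the same as the paper's: apply Theorem \ref{t:smooth-obstruction} to get a direct-summand inclusion $\HF_{\mathrm{red}}(Y_{K_n})\hookrightarrow\HF_{\mathrm{red}}(Y_{K_m})$, then rule it out using the known Floer homology of $\Sigma(2,4\ell+1,8\ell+3)$. But there are two genuine problems. First, your Floer input for the $T(3,5)$ factor is wrong: $P=\Sigma(2,3,5)$ is an L-space, so $\HF_{\mathrm{red}}(P)=\HF_{\mathrm{red}}(-P)=0$ for \emph{both} orientations; there is no summand $\F_{(0)}$. Consequently the closing paragraph of your sketch, where the $T(3,5)$ factor is supposed to ``duplicate the $\ell$-varying data'' in the K\"unneth expansion and create the distinguishing asymmetry, rests on a false premise -- the $P\#(-P)$ factor is invisible to $\HF_{\mathrm{red}}$. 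Its actual role in the construction is geometric, not Floer-theoretic: it makes $K_\ell$ a non-torus knot, so that the family escapes the topological obstruction of Proposition \ref{p:infinitelymany}, while leaving $\HF_{\mathrm{red}}(Y_{K_\ell})\cong\HF_{\mathrm{red}}(B_\ell\#(-B_\ell))$ unchanged.

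Second, the hard half of your argument is not actually carried out. The case where the ``smaller'' module might embed in the ``larger'' one (your $m>n$ case) is exactly the crux, and you leave it as ``careful bookkeeping'' of graded multiplicities without exhibiting a grading where $Y_{K_n}$ genuinely beats $Y_{K_m}$; note also that your comparison uses only Poincar\'e polynomials, i.e.\ graded $\F$-dimensions. The paper resolves this by exploiting the $\F[U]$-module structure: from Rustamov's computation, $\HFm_{\mathrm{red}}(\Sigma_\ell)$ has longest finite tower of length $\ell$, length-$1$ towers in gradings $-\ell(\ell+1)$ and $-(\ell-1)\ell$, and no other finite tower closer than $-(\ell-2)(\ell-1)$; feeding this through the K\"unneth formula \eqref{eq:redKunneth} and \eqref{eq:tensor} shows $Y_{K_\ell}$ has a well-defined ``size'' $\ell$ (longest tower of length $\ell$, and smallest positive degree supporting a length-$1$ tower equal to $2(\ell-1)$), and Remark \ref{rem:sizes} then kills both summand inclusions at once: a length-$m$ tower cannot be a summand of a module whose towers have length at most $n<m$, and the minimal positive degree of a length-$1$ tower obstructs the other direction. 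If you want to keep a purely rank-based argument, you would need to actually extract the graded dimensions from Rustamov's theorem and verify the required strict inequality in a specific grading; as written, that step is missing, and the rank-growth claim via the Casson invariant only handles one of the two directions.
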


The following definition is a compact way of referring to integer homology spheres with $\HF_{\rm red}$ of a special form, which is convenient for the set of examples that we will study here.
We say that a homology 3-sphere $Y$ is \emph{of size} $\ell \ge 2$ if:
\begin{enumerate}
\item \label{it:length} the length of the longest finite tower in $\HFm_{\mathrm{even}}(Y)$ is $\ell$;
\item \label{it:1-towers} there exists at least one finite tower of length $1$ in positive degree in $\HFm_{\mathrm{even}}(Y)$, and the smallest positive degree that supports a length-1 tower is $2(\ell-1)$.
\end{enumerate}
Note that not all homology 3-spheres have a well-defined size.

\begin{remark}
\label{rem:sizes}
If $Y_m$ and $Y_n$ are integer homology spheres of different sizes $m$ and $n$ respectively, then neither $\HF_{\rm red}(Y_m)$ nor $\HF_{\rm red}(Y_n)$ is a direct summand of the other. Suppose without loss of generality that $m>n$: then property \eqref{it:length} obstructs $\HF_{\rm red}(Y_m)$ from being a direct summand of $\HF_{\rm red}(Y_n)$, while property \eqref{it:1-towers} obstructs the other inclusion.
\end{remark}

\begin{example}
\label{ex:sizes}
Call $\Sigma_\ell = \Sigma(2,4\ell+1,8\ell+3)$.
We want to show that $Y_\ell = \Sigma_\ell \# \overline{\Sigma}_\ell$ has size $\ell$ for each $\ell \ge 2$.

Rustamov computed $\HFp(\overline{\Sigma}_\ell)$ for every $\ell$~\cite[Theorem~1.1]{Rustamov}, and from his computation we can immediately deduce $\HFm(\Sigma_\ell)$ using \cite[Proposition~2.5]{HF:p&a}. Specifically, we have:
\begin{itemize}
\item $d(\Sigma_\ell) = 0$;
\item $\HFm(\Sigma_\ell)$ is entirely supported in non-positive even degree;
\item there are exactly two towers of length $1$ in $\HFm(\Sigma_\ell)$, supported in Maslov gradings $-\ell(\ell+1)$ and $-(\ell-1)\ell$;
\item there is exactly one tower of length $\ell$ in $\HFm(\Sigma_\ell)$, which is the maximal possible length among finite towers;
\item the closest grading supporting the generator of a finite tower in $\HFm(\Sigma_\ell)$ to $-\ell(\ell+1)$ or $-(\ell-1)\ell$ is $-(\ell-2)(\ell-1)$.
\end{itemize}

Now, both properties~\eqref{it:length} and~\eqref{it:1-towers} above for $Y_\ell = \Sigma_\ell \# \overline{\Sigma}_\ell$ follow from~\cite[Corollary 6.3]{HF:p&a} and~\cite[Proposition~2.5]{HF:p&a}.

Indeed, since $\HFm_{\rm red}(\Sigma_\ell)$ is supported in even degree, we have that $\HFm_{\rm red}(\overline{\Sigma}_\ell)$ is supported in odd degree (see Equation \eqref{eq:reddual}). By the K\"unneth formula \cite[Corollary~6.3]{HF:p&a} we can compute $\HFm_{\mathrm{red}}(Y_\ell)$ (see Equation \eqref{eq:redKunneth}), and by taking the even-degree summand we obtain
\begin{equation}
\label{eq:HFredYell}
\HFm_{\mathrm{red, even}}(Y_\ell) \cong \left( \HFm_{\mathrm{red}}(\Sigma_\ell) \otimes \left(\HFm_{\mathrm{red}}(\Sigma_\ell)\right)^* \right) \oplus \HFm_{\mathrm{red}}(\Sigma_\ell).
\end{equation}
(Note that there is a degree shift by $+1$ in $\HF_\textrm{red}$ when changing the orientation, and a degree shift by $-1$ when applying the Tor functor in the tensor product, and they cancel out.)

Since
\begin{equation}
\label{eq:tensor}
\mathbb{F}[U]/(U^k) \otimes_{\mathbb{F}[U]} \mathbb{F}[U]/(U^j) \cong \mathbb{F}[U]/(U^{\min\{k,j\}}),
\end{equation}
the length of the longest tower in $\HFm_{\mathrm{red}}(\Sigma_\ell)$ will be preserved in $\HFm_{\mathrm{red, even}}(Y_\ell)$ too, hence showing property \eqref{it:length}.

We now turn to property \eqref{it:1-towers}. Referring to Equation \eqref{eq:HFredYell}, towers of length 1 with positive Maslov degree can only come from the first summand (i.e., the tensor product), because $\HFm_{\mathrm{red}}(\Sigma_\ell)$ is supported in non-positive degree. On the other hand, the first summand is symmetric, so it is enough to find the smallest Maslov grading in absolute value that supports a tower of length $1$.

By Equation \eqref{eq:tensor}, towers of length $1$ in the first summand are obtained by tensoring a tower of length $1$ with a tower of any length, and their Maslov degree is the distance (in Maslov degree) between the generators of those two towers. From the properties above, the minimum such distance in absolute value is $(\ell-1)\ell -(\ell-2)(\ell-1) = 2(\ell-1)$, thus proving property \eqref{it:1-towers} for $Y_\ell$.

Thus, $Y_\ell$ is of size $\ell$ for each $\ell \ge 2$. Since the Poincar\'e homology sphere is an L-space, we also have that $Y_\ell \# n(P \# \overline P)$ has size $\ell$ for each $\ell \ge 2$.
\end{example}

We can now prove Proposition~\ref{p:infinitelymany-smooth}.

\begin{proof}[Proof of Proposition~\ref{p:infinitelymany-smooth}]
Choose $m, n$ distinct integers and suppose that the link $L_{K_n}$ in $S^4$ is smoothly split by $Y_{K_m}$.
Then, by Theorem \ref{t:smooth-obstruction}, $\HF_{\rm red}(Y_{K_n})$ is a direct summand of $\HF_{\rm red}(Y_{K_m})$.
However, $Y_{K_n}$ is of size $n$ and $Y_{K_m}$ is of size $m$, by Example \ref{ex:sizes}, so as observed in Remark \ref{rem:sizes} neither of their $\HF_{\rm red}$ can be a direct summand of the other.
\end{proof}

\appendix

\section{Heegaard Floer homology}
\label{app:HF}
We recall here the basic facts we need about Heegaard Floer homology. This was an invariant defined by Ozsv\'ath and Szab\'o in~\cite{HF, HF:p&a} that associates to a 3-manifold a collection of groups and to cobordisms between them homomorphisms between the corresponding groups. Given the context of the paper, we will restrict to 3-manifolds that are integer homology spheres and to 4-manifold cobordisms that are rational homology cobordisms. This will greatly simplify the discussion and allow us to avoid spin$^c$ structures and rational gradings. We will work over the field $\F$ with two elements.

If $Y$ is an integer homology sphere, one of the groups associated to $Y$ is called $\HFm(Y)$: this is a finitely generated $\Z$-graded $\F[U]$-module, where $U$ is a formal variable of degree $-2$. The degree is called the \emph{Maslov grading}. It sits into an exact sequence of $U$-equivariant degree-$0$ maps\footnote{A word of caution here. There is also an $\HFp_{\mathrm{red}}$, which is a quotient of $\HFp$, and it differs from $\HFm$ by a grading shift: $\HFm_{\rm red} \cong \HFp_{\rm red}[+1]$ as graded $\F[U]$-modules. When we use $\HF_{\rm red}$ we refer to either of them, but we assume that the choice we make is coherent.}:
\[
0 \to \HF^-_{\rm red}(Y) \to \HFm(Y) \to \mathcal{T}^-_{d(Y)} \to 0,
\]
where $\mathcal{T}^-$ is an \emph{infinite tower}, $\mathcal{T}^- \cong \F[U]$, $d(Y)$ is an even integer and $1 \in \mathcal{T}^-_{d(Y)}$ sits in degree $d(Y)$, and $\HF^-_{\rm red}(Y)$ is a finitely-generated $\F[U]$-module that is finite-dimensional over $\F$.
In particular, $\HFm_{\rm red}(Y)$ splits as a direct sum of \emph{finite towers}:
\[
\HFm_{\rm red}(Y) \cong \bigoplus_i (\F[U]/(U^{\ell_i}))_{d_i},
\]
for some (not necessarily even) integers $d_i$ and some positive integers $\ell_i$. Here the $d_i$ signifies that the grading of $1 \in \F[U]/(U^{\ell_i})$ is $d_i$. (This is also the highest degree among all elements.) The quantity $d(Y)$ is called the $d$-\emph{invariant} or \emph{correction term} of $Y$.

We denote with $\HFm_{\mathrm{even}}(Y)$ (resp.\ $\HFm_{\mathrm{red, even}}(Y)$) the even-degree summand of $\HFm(Y)$ (resp.\ $\HFm_{\mathrm{red}}(Y)$). Since $U$ has degree $-2$, $\HFm_{\mathrm{even}}(Y)$ is an $\F[U]$-submodule of $\HFm(Y)$.

There is a formula relating $\HFm(Y)$ and $\HFm(\overline Y)$: $d(\overline Y) = -d(Y)$ and
\begin{equation}
\label{eq:reddual}
\HFm_{\mathrm{red}}(\overline Y) = (\HFm_{\mathrm{red}}(Y))^*[+1],
\end{equation}
where the $*$ indicates that the degrees and the $U$-action are reversed, and the $[+1]$ indicates that there's a degree shift (this follows from \cite[Proposition~2.5]{HF:p&a} and the fact that $\HFm_{\rm red} \cong \HFp_{\rm red}[+1]$). So, for instance, to a finite tower $(\F[U]/(U^{\ell_i}))_{d_i}$ in $\HFm(Y)$ there corresponds a finite tower $(\F[U]/(U^{\ell_i}))_{1-2\ell_i-d_i}$ in $\HFm(\overline Y)$.

There is a K\"unneth formula relating $\HFm(Y), \HFm(Y')$, and $\HFm(Y\# Y')$: if we denote $H = \HFm_\mathrm{red}(Y)$ and $H' = \HFm_\mathrm{red}(Y')$, then by \cite[Corollary~6.3]{HF:p&a}
\begin{equation}
\label{eq:redKunneth}
\HFm_\mathrm{red}(Y\#Y') \cong H[d(Y')] \oplus H'[d(Y)] \oplus \left(H \otimes H'\right) \oplus \left(H\otimes H'\right)[-1].
\end{equation}

If $W$ is a rational homology cobordism from $Y$ to $Y'$, then there is an induced homomorphism of $\F[U]$-modules
\[
F^-_W \colon \HFm(Y) \to \HFm(Y')
\]
which has degree $0$, which descends to an analogous homomorphism in $\HFm_{\mathrm{red}}$. This association is functorial: if $W'$ is another rational homology cobordism from $Y'$ to $Y''$, then the composition of the associated maps is the map associated to the cobordism $W''$ obtained by stacking $W$ and $W'$. In other words,
\[
F^-_{W''} = F^-_{W'} \circ F^-_{W} \colon \HFm(Y) \to \HFm(Y'').
\]
Finally, we have that $F^-_{I \times Y} = \mathrm{id}_{\HFm(Y)}$.


\bibliographystyle{alpha}
\bibliography{topology}
\end{document}